\newcommand{\bburl}[1]{\textcolor{blue}{\url{#1}}}
\newtheorem{thm}{Theorem}[section]
\newtheorem{cor}[thm]{Corollary}
\newtheorem{lem}[thm]{Lemma}
\newtheorem{prop}[thm]{Proposition}
\DeclareFixedFont{\ttb}{T1}{txtt}{bx}{n}{12} % for bold
\DeclareFixedFont{\ttm}{T1}{txtt}{m}{n}{12}  % for normal
\definecolor{deepblue}{rgb}{0,0,0.5}
\definecolor{deepred}{rgb}{0.6,0,0}
\definecolor{deepgreen}{rgb}{0,0.5,0}
\newcommand\pythonstyle{\lstset{
language=Python,
basicstyle=\ttm,
morekeywords={self},              % Add keywords here
keywordstyle=\ttb\color{deepblue},
emph={MyClass,__init__},          % Custom highlighting
emphstyle=\ttb\color{deepred},    % Custom highlighting style
stringstyle=\color{deepgreen},
frame=tb,                         % Any extra options here
showstringspaces=false
}}
\newcommand\pythoninline[1]{{\pythonstyle\lstinline!#1!}}
\numberwithin{equation}{section}
\DeclareFontFamily{U}{mathx}{}
\DeclareFontShape{U}{mathx}{m}{n}{<-> mathx10}{}
\DeclareSymbolFont{mathx}{U}{mathx}{m}{n}
\DeclareMathAccent{\widehat}{0}{mathx}{"70}
\DeclareMathAccent{\widecheck}{0}{mathx}{"71}
\begin{document}

%%%%%

\title{A Pair of Diophantine Equations\\ Involving the Fibonacci Numbers}

\author[Chen]{Xuyuan Chen}
\email{\textcolor{blue}{\href{mailto:xuyuanc@andrew.cmu.edu}{xuyuanc@andrew.cmu.edu}}}
\address{Mathematical Sciences\\ Carnegie Mellon University\\ Pittsburgh, PA 15213, USA}

\author[Chu]{H\`ung Vi\d{\^e}t Chu}
\email{\textcolor{blue}{\href{mailto:hungchu1@tamu.edu}{hungchu1@tamu.edu}}}
\address{Department of Mathematics\\ Texas A\&M University, College Station, TX 77843, USA}

\author[Kesumajana]{Fadhlannafis K. Kesumajana}
\email{\textcolor{blue}{\href{mailto:nafis.khowarizmi@gmail.com}{nafis.khowarizmi@gmail.com}}}\address{Department of Mathematics\\ Institut Teknologi Bandung, Bandung, Jawa Barat 40132, Indonesia}

\author[Kim]{Dongho Kim}
\email{\textcolor{blue}{\href{mailto:dtkim25@berkeley.edu}{dtkim25@berkeley.edu}}}
\address{Department of Mathematics\\ University of California, Berkeley, Berkeley, CA 94720, USA}

\author[Li]{Liran Li}
\email{\textcolor{blue}{\href{mailto:zqj6pe@virginia.edu}{zqj6pe@virginia.edu}}}
\address{Department of Mathematics\\ University of Virginia, Charlottesville, VA 22904, USA}

\author[Miller]{Steven J. Miller}
\email{\textcolor{blue}{\href{mailto:sjm1@williams.edu}{sjm1@williams.edu},
\href{mailto:Steven.Miller.MC.96@aya.yale.edu}{Steven.Miller.MC.96@aya.yale.edu}}}
\address{Department of Mathematics and Statistics, Williams College, Williamstown, MA 01267, USA}

\author[Yang]{Junchi Yang}
\email{\textcolor{blue}{\href{mailto:j647yang@uwaterloo.ca}{j647yang@uwaterloo.ca}}}
\address{Department of Pure Mathematics\\ University of Waterloo, Ontario, N2L 3G1, Canada}

\author[Chris]{Chris Yao}
\email{\textcolor{blue}{\href{mailto:chris.yao@berkeley.edu}{chris.yao@berkeley.edu}}}
\address{Department of Mathematics\\ University of California, Berkeley, Berkeley, CA 94720, USA}

\subjclass[2020]{11B39, 11D04}

\keywords{Fibonacci numbers; Diophantine equations; integral solutions}

\thanks{We thank the participants at Polymath Jr. 2024 REU for helpful discussions.}

\maketitle

\begin{abstract}
Let $a, b\in \mathbb{N}$ be relatively prime. Previous work showed that exactly one of the two equations $ax + by = (a-1)(b-1)/2$ and $ax + by + 1 = (a-1)(b-1)/2$ has a nonnegative, integral solution; furthermore, the solution is unique. Let $F_n$ be the $n$\textsuperscript{th} Fibonacci number. When $(a,b) = (F_n, F_{n+1})$, it is known that there is an explicit formula for the unique solution $(x,y)$. We establish formulas to compute the solution when $(a,b) = (F_n^2, F_{n+1}^2)$ and $(F_n^3, F_{n+1}^3)$, giving rise to some intriguing identities involving Fibonacci numbers. Additionally, we construct a different pair of equations that admits a unique positive (instead of nonnegative), integral solution.
\end{abstract}

\section{Introduction}

The study of Diophantine equations, particularly those associated with specific integer sequences, has been a topic of great interest in number theory (see \cite{KCC, GGL1, GGL2, GS, I, Tac} for some recent work on Diophantine equations that involve Fibonacci numbers). 
We study a pair of Diophantine equations, first encountered by Beiter \cite{Be} in the study of cyclotomic polynomials $\Phi_{pq}(x)$ for primes $p < q$. The pair was later extended by Chu \cite{C4} to relatively prime numbers. 

\begin{thm}\cite[Theorem 1.1]{C4}\label{o1}
For relatively prime $a, b\in \mathbb{N}$, exactly one of the following equations has a nonnegative, integral solution
\begin{align}
\label{eq1}ax + by &\ =\ \frac{(a-1)(b-1)}{2},\\
\label{eq2} ax + by + 1 &\ =\ \frac{(a-1)(b-1)}{2}.
\end{align}
Furthermore, the solution is unique. 
\end{thm}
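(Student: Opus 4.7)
The plan is to deduce this from the classical Sylvester--Frobenius theory of the numerical semigroup generated by two coprime integers. Set $N := (a-1)(b-1)/2 = (ab-a-b+1)/2$, so that $ab - a - b = 2N-1$ is odd (this confirms, incidentally, that $N$ is always an integer since $(a-1)(b-1)$ is always even when $\gcd(a,b)=1$). The question of whether \eqref{eq1} or \eqref{eq2} has a nonnegative integral solution is precisely the question of whether $N$ or $N-1$ is representable as $ax+by$ with $x,y \ge 0$.

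The key tool I would invoke is Sylvester's symmetry (duality) for the two-generator numerical semigroup: for every integer $n$ with $0 \le n \le ab-a-b$, exactly one of $n$ and $(ab-a-b)-n$ is representable as $ax+by$ with $x,y\ge 0$. Apply this to $n=N$. A short computation gives
\[
(ab-a-b) - N \;=\; (2N-1) - N \;=\; N-1,
\]
so $N$ is representable if and only if $N-1$ is \emph{not} representable. This is exactly the claim that one and only one of \eqref{eq1}, \eqref{eq2} admits a nonnegative integral solution.

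For uniqueness, I would argue directly: suppose $ax+by = M$ has two nonnegative solutions $(x,y)$ and $(x',y')$ for $M \in \{N,N-1\}$. Then $a(x-x') = b(y'-y)$, and $\gcd(a,b)=1$ forces $y' - y = ka$ for some integer $k$. From $by \le M \le N = (a-1)(b-1)/2 < ab/2$ we get $0 \le y < a/2$ and similarly $0 \le y' < a/2$, so $|y'-y| < a$, which forces $k=0$. Hence $y=y'$ and then $x=x'$.

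I do not expect real obstacles here, since everything reduces to classical two-variable Frobenius theory; the only subtle point is recognizing that Sylvester's symmetry (not just the count of $(a-1)(b-1)/2$ gaps) is what delivers the ``exactly one'' dichotomy—raw gap-counting would merely show that on average half of the pairs $\{N-1, N\}$ contain a representable value, not that exactly one does in every case. If I wanted a self-contained proof of the symmetry itself, I would give the standard argument: for each residue class $r \pmod b$ with $0 \le r < b$, the smallest nonnegative integer in that class of the form $ax+by$ is $ax_r$ where $x_r \in \{0,1,\dots,b-1\}$ is the unique representative with $ax_r \equiv r \pmod b$, and then $n \equiv r$ is representable iff $n \ge ax_r$, from which the involution $n \mapsto (ab-a-b)-n$ interchanging representable and non-representable values follows by a direct check.
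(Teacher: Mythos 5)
The paper does not actually prove this statement: Theorem \ref{o1} is imported verbatim from \cite[Theorem 1.1]{C4} and used as a black box, so there is no in-paper argument to compare against. Judged on its own, your proof is correct and complete. The reduction is clean: with $N=(a-1)(b-1)/2$ one has $ab-a-b=2N-1$, so the Sylvester--Frobenius symmetry (for coprime $a,b$ and any integer $n$, exactly one of $n$ and $ab-a-b-n$ lies in the numerical semigroup $\langle a,b\rangle$) applied at $n=N$ gives precisely the dichotomy between \eqref{eq1} and \eqref{eq2}; your closing sketch of the symmetry via the minimal representative $ax_r$ in each residue class mod $b$ is the standard and correct argument, and your observation that mere gap-counting would not suffice is a genuinely important point. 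The uniqueness argument is also sound: from $a(x-x')=b(y'-y)$ and coprimality you get $a\mid(y'-y)$, while $by,by'\le N<ab/2$ forces $|y'-y|<a$. Two small polishing remarks. First, you state the symmetry only for $0\le n\le ab-a-b$; when $a=1$ or $b=1$ one has $N=0$ and $N-1=-1$ falls outside that window, so either invoke the symmetry in its all-integers form ($n$ representable iff $ab-a-b-n$ is not) or dispose of the case $\min(a,b)=1$ by hand (there \eqref{eq1} is solved by $(0,0)$ up to the obvious choice and \eqref{eq2} asks for $ax+by=-1$). Second, the original proof in \cite{C4} is a direct elementary manipulation of residues rather than an appeal to numerical-semigroup duality; your route is shorter and places the result in its natural classical context, at the cost of either citing or reproving Sylvester's symmetry.
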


Thanks to Theorem \ref{o1}, we can define the function $\Gamma: \{(m,n): \gcd(m, n) = 1\}\rightarrow \{1,2\}$ as
$$\Gamma(m,n) \ =\ \begin{cases}1,&\mbox{ if }\eqref{eq1} \mbox{ has a solution when }(a,b) = (m,n);\\2,&\mbox{ if }\eqref{eq2} \mbox{ has a solution when }(a,b) = (m,n).\end{cases}$$ It is well-known that two consecutive Fibonacci numbers are relatively prime. Chu \cite{C4} then studied $\Gamma(F_n, F_{n+1})$, where $(F_n)_{n=0}^\infty$ are the Fibonacci numbers defined as $F_0 = 0, F_1 = 1$, and $F_n = F_{n-1} + F_{n-2}$ for $n\ge 2$. It turned out that for $n\ge 3$, $\Gamma(F_n, F_{n+1})$ alternates between $1$ and $2$ in groups of three (\cite[Theorem 1.6]{C4}). Continuing the work, Davala \cite{Da} examined  $\Gamma(B_n, B_{n+1})$, $\Gamma(B_{2n-1}, B_{2n+1})$, $\Gamma(C_n, C_{n+1})$, and $\Gamma(B_n, C_n)$,  with $(B_n)_{n=1}^\infty$ and $(C_n)_{n=1}^\infty$ being the so-called $n$\textsuperscript{th} balancing number and the $n$\textsuperscript{th} Lucas-balancing number.\footnote{Balancing numbers were introduced by Behera and Panda \cite{BB} to be solutions of the Diophantine equation $1 + 2 + \cdots + (n - 1) = (n + 1) + (n + 2) + \cdots + (n + r)$ for some natural number $r$. The $n$\textsuperscript{th} balancing number is denoted by $B_n$, and $C_n = \sqrt{8B_n^2+1}$ is called the $n$\textsuperscript{th} Lucas number \cite{R}.} These sequences  satisfy the recurrence relations $B_1 = 1, B_2 = 6, B_{n+1} = 6B_n - B_{n - 1}$ and $C_1 = 3, C_2 = 17, C_{n+1} = 6C_n - C_{n-1}$ for $n \ge 2$.

Recently, Arachchi et al.\ \cite{ACLLMM} provided a useful criterion on $(a,b)$ to determine the value of $\Gamma(a,b)$.
The authors then used the criterion to study $\Gamma(a_n, a_{n+1})$ for various sequences $(a_n)_{n=1}^\infty$, including the natural numbers raised to the $k$\textsuperscript{th} power, arithmetic progressions, shifted geometric sequences, and so on. One notable result is that for a fixed $k$, $\Gamma(n^k, (n+1)^k)$ eventually alternates between $1$ and $2$. 

Our main results are inspired by the identities in \cite[Theorem 1.6]{C4}:  
\begin{align*}
\frac{F_{6k-1}-1}{2}F_{6k}+\frac{F_{6k-1}-1}{2}F_{6k+1}&\ =\ \frac{(F_{6k}-1)(F_{6k+1}-1)}{2},\\
\frac{F_{6k+1}-1}{2}F_{6k+1}+\frac{F_{6k-1}-1}{2}F_{6k+2}&\ =\ \frac{(F_{6k+1}-1)(F_{6k+2}-1)}{2},\\
\frac{F_{6k+1}-1}{2}F_{6k+2}+\frac{F_{6k+1}-1}{2}F_{6k+3}&\ =\ \frac{(F_{6k+2}-1)(F_{6k+3}-1)}{2},\\
1+\frac{F_{6k+2}-1}{2}F_{6k+3}+\frac{F_{6k+2}-1}{2}F_{6k+4}&\ =\ \frac{(F_{6k+3}-1)(F_{6k+4}-1)}{2},\\
1+\frac{F_{6k+4}-1}{2}F_{6k+4}+\frac{F_{6k+2}-1}{2}F_{6k+5}&\ =\ \frac{(F_{6k+4}-1)(F_{6k+5}-1)}{2},\\
1+\frac{F_{6k+4}-1}{2}F_{6k+5}+\frac{F_{6k+4}-1}{2}F_{6k+6}&\ =\ \frac{(F_{6k+5}-1)(F_{6k+6}-1)}{2}.
\end{align*}
Therefore, not only is $\Gamma(F_n, F_{n+1})$ periodic, but there is also a formula for the solution $(x,y)$. We shall examine $\Gamma(F^2_n, F^2_{n+1})$ and establish a formula for the solution $(x,y)$. Let us look at the data.

\begin{center}
\begin{tabular}{ |c|c|c|c|c|c| } 
 \hline
 $n$ & $F_n^2$ & $F_{n+1}^2$ & $x_n$ & $y_n$ & $\Gamma(F_n^2, F_{n+1}^2)$ \\
 \hline
2 & 1 & 4 & 0 & 0 & 1\\
3 & 4 & 9 & 3 & 0 & 1 \\ 
4 & 9 & 25 & 5 & 2 & 2 \\
5 & 25 & 64 & 20 & 4 & 1 \\
6 & 64 & 169 & 51 & 12 & 1 \\
7 & 169 & 441 & 83 & 52 & 2\\
8 & 441 & 1156 & 356 & 84 & 1\\
9 & 1156 & 3025 & 935 & 220 & 1\\
10 & 3025 & 7921 & 1513 & 934 &2\\
11 & 7921 & 20736 & 6408 & 1512 & 1\\
12 & 20736 & 54289 & 16775 & 3960 & 1\\
13 & 54289 & 142129 & 27143 & 16776 &2\\
 \hline
\end{tabular}
\begin{center}Table 1. Data for $\Gamma(F_n^2, F_{n+1}^2)$.\end{center}
\end{center}

The data suggest that as $n$ increases, $\Gamma(F_n^2, F_{n+1}^2)$ repeats the pattern $1, 1, 2$. Furthermore, the unique nonnegative, integral solution $(x_n, y_n)$ of the Diophantine pair 
\begin{align}
    \label{eq6}F_n^2x_n + F_{n+1}^2y_n &\ =\ \frac{(F_n^2-1)(F_{n+1}^2-1)}{2},\\
    \label{eq7}1+ F_n^2x_n + F_{n+1}^2y_n &\ =\ \frac{(F_n^2-1)(F_{n+1}^2-1)}{2}
\end{align}
seems to have the property that for $n\not \equiv 1\mod 3$, 
$$F_n^2 - x- y\ =\ 1,$$
while 
$$\begin{cases}
2x_n - F_n^2 \ =\ 1,&\mbox{ for }n \equiv 4\mod 6;\\
2x_n - F_n^2\ =\ -3, &\mbox{ for }n \equiv 1\mod 6.
\end{cases}$$

The next theorem and proposition confirm that our observations hold for all $n\ge 2$:

\begin{thm}\label{m1}
For $n\ge 2$, we have the following identities:
\begin{align}
&\label{eq3}1 + \frac{F_n^2 - 3}{2} \cdot F_n^2 + \frac{F_n^2 - F_{n-1}^2 - 1}{2} \cdot F_{n+1}^2 \ = \ \frac{(F_{n}^2 - 1) ( F_{n+1}^2 - 1)}{2},  {\rm if \ } n{\rm  \ is\ odd};\\
&\label{eq4}1 + \frac{F_n^2 + 1}{2} \cdot F_n^2 + \frac{F_n^2 - F_{n-1}^2 - 1}{2} \cdot F_{n+1}^2 \ = \ \frac{(F_{n}^2 - 1) ( F_{n+1}^2 - 1)}{2}, {\rm if \ } n{\rm  \ is\ even};\\
&\label{eq5}\left(F_n^2 - \frac{F_{n-1}^2 +1}{2} \right) \cdot F_n^2 + \frac{F_{n-1}^2 - 1}{2} \cdot F_{n+1}^2\ = \ \frac{(F_n^2 - 1) (F_{n+1}^2 - 1)}{2}.
\end{align}
\end{thm}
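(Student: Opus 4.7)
The plan is to reduce each of the three identities to a polynomial identity in two variables and to invoke Cassini's identity to finish. Set $a := F_{n-1}$ and $b := F_n$. The Fibonacci recurrence gives $F_{n+1} = a+b$, $F_{n+2} = a+2b$, and $F_{n-2} = b-a$, while Cassini's identity $F_{n-1}F_{n+1} - F_n^2 = (-1)^n$ becomes
\[
a^2 + ab - b^2 \;=\; (-1)^n.
\]

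For \eqref{eq5}, which is supposed to hold for every parity, I first multiply by $2$ and collect terms; the identity becomes equivalent to $2F_n^4 + F_{n-1}^3 F_{n+2} = F_n^2 F_{n+1}^2 + 1$. Substituting the expressions above and expanding, the difference of the two sides works out to $a^4 + 2a^3 b - a^2 b^2 - 2ab^3 + b^4 - 1$, which factors as $(a^2+ab-b^2)^2 - 1$. By Cassini this is $((-1)^n)^2 - 1 = 0$, and the parity of $n$ plays no role, consistent with the form of \eqref{eq5}.

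For \eqref{eq3} and \eqref{eq4}, I plan to treat them uniformly. Using $F_n^2 - F_{n-1}^2 = F_{n-2}F_{n+1}$ on the left-hand side and $F_{n+1}^2 - F_n^2 = F_{n-1}F_{n+2}$ on the right-hand side, after multiplying by $2$ both identities collapse into the single statement
\[
F_{n-2}F_{n+1}^3 - F_n^2 F_{n-1}F_{n+2} \;=\; (-1)^{n+1}\cdot 2F_n^2 - 1,
\]
where $n$ odd gives \eqref{eq3} and $n$ even gives \eqref{eq4}. Substituting in $a, b$ reduces the left-hand side to $b^4 - a^2 b^2 - a^4 - 2a^3 b$. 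I rewrite this as $b^2(b^2 - a^2) - a^3(a+2b)$ and apply Cassini twice: first as $b^2 - a^2 = ab + (-1)^{n+1}$, and then through $b^3 = a^3 + 2a^2 b + (-1)^{n+1}(a+b)$, which I obtain by multiplying $b^2 = a^2 + ab + (-1)^{n+1}$ by $b$ and substituting once more. The combination collapses the left-hand side to $(-1)^{n+1} b^2 - 1$, matching the desired right-hand side.

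The main obstacle is nothing conceptual; it is the bookkeeping of expanding $(a+b)^3$ and $(b-a)(a+b)^3$ and of tracking the two sign patterns that depend on the parity of $n$. Once the substitution $a = F_{n-1}, b = F_n$ is made and $F_{n+1}, F_{n+2}, F_{n-2}$ are rewritten accordingly, every claim in the theorem is a direct polynomial consequence of Cassini's identity applied once (for \eqref{eq5}, in squared form) or twice (for \eqref{eq3} and \eqref{eq4}).
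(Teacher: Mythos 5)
Your argument is correct and, after fixing one typo, complete; it uses the same basic ingredients as the paper (Cassini's identity plus the Fibonacci recurrence) but organizes them differently. The paper proves \eqref{eq3} and \eqref{eq4} separately, starting from $F_n^2 \mp 1 = F_{n-1}F_{n+1}$, squaring, and adding carefully chosen terms to both sides until the target expression appears; for \eqref{eq5} it combines $F_n^2 - F_{n-1}^2 - F_{n+1}^2 = -2F_{n-1}F_{n+1}$ with the squared Cassini identity. You instead reduce everything to polynomial identities in $a = F_{n-1}$, $b = F_n$ subject to the single relation $a^2 + ab - b^2 = (-1)^n$, which lets you treat \eqref{eq3} and \eqref{eq4} uniformly as the one parity-parametrized statement
\[
F_{n-2}F_{n+1}^3 - F_n^2 F_{n-1}F_{n+2} \;=\; (-1)^{n+1}\cdot 2F_n^2 - 1,
\]
and exhibits \eqref{eq5} as the transparent factorization $(a^2+ab-b^2)^2 - 1 = 0$. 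I checked the reductions: \eqref{eq5} is indeed equivalent to $2F_n^4 + F_{n-1}^3F_{n+2} = F_n^2F_{n+1}^2 + 1$, and the displayed unified identity does specialize correctly to \eqref{eq3} for odd $n$ and \eqref{eq4} for even $n$. Your approach is more mechanical and arguably easier to verify or generalize, since everything becomes a computation in $\mathbb{Z}[a,b]$ modulo one quadratic relation; the paper's manipulations are shorter on the page but more ad hoc.

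One slip to correct: in the final step you write that the left-hand side collapses to $(-1)^{n+1}b^2 - 1$. The computation you describe actually yields $(-1)^{n+1}(a^2+ab+b^2)$, which by one more application of $a^2+ab-b^2 = (-1)^n$ equals $(-1)^{n+1}\cdot 2b^2 - 1$ --- note the factor $2$, which is what the target right-hand side requires (check $n=3$: the left side is $27 - 20 = 7 = 2\cdot 4 - 1$, not $4-1$). So the argument is right; only the stated final value is missing the $2$.
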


\begin{prop}\label{p1}
For $n \ge 0$ with $n \equiv 0 \mod 3$, $F_n$ is even, while $F_{n+1}$ and $F_{n+2}$ are odd.
\end{prop}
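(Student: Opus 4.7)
The plan is to prove Proposition \ref{p1} by a straightforward induction on $k$ where $n = 3k$, exploiting the fact that parity behaves well under the Fibonacci recurrence. I expect no real obstacle here; the result is a classical periodicity statement about $F_n \bmod 2$.

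First, I would verify the base case $k = 0$ directly: $F_0 = 0$ is even, and $F_1 = F_2 = 1$ are both odd, so the triple $(F_0, F_1, F_2)$ has parities $(\text{even}, \text{odd}, \text{odd})$ as claimed.

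Next, for the inductive step, I would assume that for some $k \ge 0$, the triple $(F_{3k}, F_{3k+1}, F_{3k+2})$ has parities $(\text{even}, \text{odd}, \text{odd})$, and deduce the same pattern for the triple starting at $3(k+1) = 3k+3$. Applying the recurrence $F_{m} = F_{m-1} + F_{m-2}$ three times gives
\begin{align*}
F_{3k+3} &\ =\ F_{3k+2} + F_{3k+1} \ \equiv\ 1 + 1 \ \equiv\ 0 \pmod{2},\\
F_{3k+4} &\ =\ F_{3k+3} + F_{3k+2} \ \equiv\ 0 + 1 \ \equiv\ 1 \pmod{2},\\
F_{3k+5} &\ =\ F_{3k+4} + F_{3k+3} \ \equiv\ 1 + 0 \ \equiv\ 1 \pmod{2},
\end{align*}
so $(F_{3k+3}, F_{3k+4}, F_{3k+5})$ again has parities $(\text{even}, \text{odd}, \text{odd})$, completing the induction. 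The entire argument is routine once one observes that the parity sequence of $(F_n)$ is periodic with period $3$.
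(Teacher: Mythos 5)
Your proof is correct and follows essentially the same route as the paper: induction in steps of three, checking the base case $(F_0,F_1,F_2)=(0,1,1)$ and propagating the parity pattern $(\text{even},\text{odd},\text{odd})$ through three applications of the recurrence. No issues.
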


\begin{cor}
For $n\ge 2$ and $n\equiv 0, 2, 3, 5\mod 6$, 
$$(x, y)\ =\ \left(F_n^2 - \frac{F_{n-1}^2 +1}{2}, \frac{F_{n-1}^2 - 1}{2}\right)$$
is the unique nonnegative, integral solution of \eqref{eq6}.

For $n\ge 2$ and $n\equiv 1\mod 6$, 
$$(x,y) \ =\ \left(\frac{F_n^2-3}{2}, \frac{F_n^2-F_{n-1}^2-1}{2}\right)$$
is the unique nonnegative, integral solution of \eqref{eq7}.

For $n\ge 2$ and $n\equiv 4\mod 6$, 
$$(x,y) \ =\ \left(\frac{F_n^2+1}{2}, \frac{F_n^2-F_{n-1}^2-1}{2}\right)$$
is the unique nonnegative, integral solution of \eqref{eq7}.
\end{cor}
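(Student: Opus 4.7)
The plan is to deduce the corollary directly from Theorem \ref{m1}, Proposition \ref{p1}, and Theorem \ref{o1}, by exhibiting the claimed pair as a nonnegative integral solution in each residue class modulo $6$ and invoking uniqueness. First I would note that $\gcd(F_n, F_{n+1})=1$ yields $\gcd(F_n^2, F_{n+1}^2)=1$, so Theorem \ref{o1} applies and guarantees that, for each $n \ge 2$, exactly one of \eqref{eq6} and \eqref{eq7} admits a unique nonnegative integral solution. Consequently it suffices, in each case, to verify that the proposed formula consists of nonnegative integers and satisfies the asserted equation; Theorem \ref{o1} then supplies uniqueness automatically.

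For the case $n \equiv 0, 2, 3, 5 \pmod 6$, which is equivalent to $n-1 \not\equiv 0 \pmod 3$, I would apply Proposition \ref{p1} to conclude that $F_{n-1}$ is odd; hence $F_{n-1}^2$ is odd and both $(F_{n-1}^2-1)/2$ and $(F_{n-1}^2+1)/2$ are nonnegative integers. The inequality $F_n \ge F_{n-1}$ for $n \ge 2$ yields $F_n^2 \ge (F_{n-1}^2+1)/2$, so the pair $\bigl(F_n^2 - (F_{n-1}^2+1)/2,\ (F_{n-1}^2-1)/2\bigr)$ is a nonnegative integer pair. Identity \eqref{eq5} of Theorem \ref{m1} then shows that it satisfies \eqref{eq6}, completing this case.

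For $n \equiv 1 \pmod 6$ or $n \equiv 4 \pmod 6$, one has $n \equiv 1 \pmod 3$, so Proposition \ref{p1} applied to $n-1 \equiv 0 \pmod 3$ gives $F_{n-1}$ even and $F_n$ odd. Consequently $F_n^2$ is odd, so $(F_n^2 \pm 1)/2$ and $(F_n^2-3)/2$ are nonnegative integers (using $F_n \ge 3$ for $n \ge 4$), and $F_n^2 - F_{n-1}^2 - 1$ is even and nonnegative, so $(F_n^2 - F_{n-1}^2 - 1)/2$ is a nonnegative integer as well. When $n \equiv 1 \pmod 6$, $n$ is odd and identity \eqref{eq3} of Theorem \ref{m1} delivers the solution to \eqref{eq7}; when $n \equiv 4 \pmod 6$, $n$ is even and identity \eqref{eq4} does the same. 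In both situations Theorem \ref{o1} guarantees uniqueness.

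No step constitutes a genuine obstacle: all of the analytic content is already packaged in Theorem \ref{m1}, and the proof reduces to parity checks via Proposition \ref{p1} together with careful bookkeeping of the residue classes modulo $6$, matching each $n$ with the correct parity, the correct identity of Theorem \ref{m1}, and the correct equation among \eqref{eq6}, \eqref{eq7}. The only place requiring a moment's attention is confirming that the three identities of Theorem \ref{m1} cover every relevant residue class of $n \bmod 6$ and that their coefficients are forced to be integers exactly in those classes, which is precisely what Proposition \ref{p1} provides.
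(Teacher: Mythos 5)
Your proposal is correct and follows exactly the route the paper intends: the corollary is presented there as an immediate consequence of Theorem \ref{o1} (existence and uniqueness for coprime $a,b$), Theorem \ref{m1} (the three identities exhibiting the solutions), and Proposition \ref{p1} (the parity facts guaranteeing the coefficients are integers), with no separate written proof. Your residue-class bookkeeping and nonnegativity checks are accurate, so nothing further is needed.
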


Next, we investigate $\Gamma(F^3_n, F^3_{n+1})$ and find formulas for the unique integral solution $(x,y)$ of the pair 
\begin{align}
    \label{eq8}F_n^3x_n + F_{n+1}^3y_n &\ =\ \frac{(F_n^3-1)(F_{n+1}^3-1)}{2},\\
    \label{eq9}1+ F_n^3x_n + F_{n+1}^3y_n &\ =\ \frac{(F_n^3-1)(F_{n+1}^3-1)}{2}.
\end{align}
Interestingly, formulas for the solution can be expressed as a (alternate) sum of Fibonacci numbers cubed. As before, we examine the data.

\begin{center}
\begin{tabular}{ |c|c|c|c|c|c| } 
 \hline
 $n$ & $F_n^3$ & $F_{n+1}^3$ & $x_n$ & $y_n$ & $\Gamma(F_n^3, F_{n+1}^3)$ \\
 \hline
2 & 1 & 8 & 0 & 0 & 1\\
3 & 8 & 27 & 8 & 1 & 1\\
4 & 27 & 125 & 18 & 9 & 2\\
5 & 125 & 512 & 106 & 36 & 1\\
6 & 512 & 2197 & 405 & 161 & 2\\
7 & 2197 & 9261 & 1791 & 673 & 1\\
8 & 9261 & 39304 & 7469	& 2870 & 2\\
 \hline
\end{tabular}
\begin{center}Table 2. Data for $\Gamma(F_n^3, F_{n+1}^3)$.\end{center}
\end{center}
From the data, $\Gamma(F_n^3, F_{n+1}^3)$ seems to alternate between $1$ and $2$, and 
$$x_4 \ =\ 3^3 - x_3 - 1,\quad y_4 \ =\ y_3 + 2^3,$$ 
$$x_5 \ =\  5^3 - x_4 - 1,\quad y_5 \ =\ y_4 + 3^3,$$
$$x_6 \ =\ 8^3 - x_5 - 1, \quad y_6 \ =\ y_5 + 5^3,$$
which suggests that for $n\ge 4$,
$$\begin{cases}
        x_n &\ = \ F_n^3 - x_{n-1} -1,\\
        y_n &\ = \ y_{n-1} + F_{n-1}^3.
\end{cases}$$
These observations are verified for all $n\ge 3$ by our next theorem. 

\begin{thm}\label{m2}
    For $m \in \mathbb{N}_{\ge 2}$, we have the following identities:
    \begin{align}
        \label{eq12}\left( \sum_{k=1}^{2m - 1} (-1)^{k-1} F_k^3\right) F_{2m-1}^3 + \left( \sum_{k=2}^{2m-2} F_{k}^3 \right) F_{2m}^3 \ = \ \frac{(F_{2m-1}^3 - 1) (F_{2m}^3 - 1)}{2}, \\
        \label{eq13}1 + \left( \sum_{k=1}^{2m} (-1)^{k} F_k^3 - 1\right) F_{2m}^3 + \left( \sum_{k=2}^{2m-1} F_{k}^3 \right) F_{2m+1}^3 \ = \ \frac{(F_{2m}^3 - 1) (F_{2m+1}^3 - 1)}{2}.
    \end{align}
\end{thm}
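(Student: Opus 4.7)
My plan is to prove \eqref{eq12} and \eqref{eq13} together by induction on $m$. The base case $m=2$ is a direct substitution check: \eqref{eq12} reads $8\cdot 8 + 1\cdot 27 = 91 = 7\cdot 26/2$, and \eqref{eq13} reads $1 + 18\cdot 27 + 9\cdot 125 = 1612 = 26\cdot 124/2$.

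To unify the inductive step, write $(x_n, y_n)$ for the coefficient pair of $F_n^3$ and $F_{n+1}^3$ in the relevant identity, and $\epsilon_n \in \{0,1\}$ for the additive constant ($0$ when $n$ is odd, $1$ when $n$ is even), so that both identities collapse to the uniform statement
\[
\epsilon_n + x_n F_n^3 + y_n F_{n+1}^3 \;=\; R_n, \qquad R_n := \frac{(F_n^3-1)(F_{n+1}^3-1)}{2}.
\]
A direct bookkeeping check on the prescribed partial sums yields the recursions $x_n = F_n^3 - x_{n-1} - 1$, $y_n = y_{n-1} + F_{n-1}^3$, and $\epsilon_n = 1 - \epsilon_{n-1}$ for $n\ge 4$. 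Substituting these into the $n$-level identity and subtracting the inductive hypothesis at level $n-1$ reduces the target to an identity purely in Fibonacci numbers, which I would then attack with the cube-sum relation $F_{n+1}^3 = F_n^3 + F_{n-1}^3 + 3F_{n-1}F_nF_{n+1}$ and Cassini's identity $F_{n-1}F_{n+1} - F_n^2 = (-1)^n$ (whose cube yields $F_{n-1}^3 F_{n+1}^3 = (F_n^2+(-1)^n)^3$).

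The main obstacle is that a single inductive hypothesis gives only one linear relation among $x_{n-1}$ and $y_{n-1}$, whereas the substitution leaves residual cross-terms such as $-x_{n-1}F_n^3 + y_{n-1}F_{n+1}^3$ that require a second relation. I plan to resolve this by invoking the inductive hypothesis at \emph{both} levels $n-1$ and $n-2$; combined with the recursions for $x$ and $y$, these produce two independent linear equations in $(x_{n-1}, y_{n-1})$ which suffice to isolate any needed cross-combination. A cleaner alternative is to appeal directly to Theorem~\ref{o1}: after checking that the transformed pair $(F_n^3 - x_{n-1} - 1,\, y_{n-1} + F_{n-1}^3)$ is a nonnegative integer solution of one of \eqref{eq8}--\eqref{eq9} at level $n$, uniqueness forces it to equal $(x_n, y_n)$, and the identity follows. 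Either route reduces the remaining work to an explicit, routine-but-notationally-heavy verification in cubes of consecutive Fibonacci numbers.
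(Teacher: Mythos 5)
Your setup is sound: the base case $m=2$ checks out, and the recursions $x_n = F_n^3 - x_{n-1} - 1$, $y_n = y_{n-1}+F_{n-1}^3$, $\epsilon_n = 1-\epsilon_{n-1}$ do follow from the definitions of the partial sums. You also correctly identify the obstruction: after substituting the recursions, the inductive hypothesis at level $n-1$ controls the combination $x_{n-1}F_{n-1}^3 + y_{n-1}F_n^3$, whereas the target leaves behind $-x_{n-1}F_n^3 + y_{n-1}F_{n+1}^3$. The problem is that neither of your two proposed resolutions is carried out, and one of them cannot work at all. The appeal to Theorem \ref{o1} is circular: uniqueness only lets you conclude that the transformed pair equals the solution \emph{after} you have verified that it satisfies \eqref{eq8} or \eqref{eq9} at level $n$, and that verification is precisely the identity \eqref{eq12} or \eqref{eq13} you are trying to prove. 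Theorem \ref{o1} guarantees that a unique solution exists; it gives no way to certify that a candidate is that solution other than plugging it in.

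The two-level induction could in principle close, since the hypotheses at levels $n-1$ and $n-2$ together with the recursions determine $(x_{n-1},y_{n-1})$ as explicit rational expressions in Fibonacci cubes (the relevant determinant is $F_{n-1}^6+F_n^3F_{n-2}^3$, which is nonzero but far from $\pm1$, so the elimination introduces denominators that must cancel), and the target then becomes a fixed polynomial identity in $F_{n-2},\dots,F_{n+1}$ with no free parameters. But that residual identity is the entire mathematical content of the theorem, and you have not verified it; calling it ``routine'' understates the work, which is at least as heavy as the paper's. For comparison, the paper avoids induction altogether: it substitutes Frontczak's closed forms for $\sum_{k\le n}F_k^3$ and $\sum_{k\le n}(-1)^kF_k^3$ (Theorem \ref{km}) into the left-hand sides, eliminates the resulting $F_{6m}$, $F_{6m\pm 3}$ terms via $F_{3n}=5F_n^3+3(-1)^nF_n$, and collapses the remainder to a constant with Cassini's identity. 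Until you supply the analogous computation at the end of your inductive step, the proof is incomplete.
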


We shall prove Theorems \ref{m1} and \ref{m2} in Sections \ref{fibsquared} and \ref{fibcubed}, respectively. In Section \ref{anotherpair}, we construct another pair of Diophantine equations that resembles the original pair, \eqref{eq1} and \eqref{eq2}, but 
admits a unique positive (instead of nonnegative), integral solution and is asymmetric in the sense that $\Gamma(a,b)$ may not be equal to $\Gamma(b,a)$. Finally, Section \ref{problems} is devoted to discussing several problems for future investigation.

%%%%%%%%%%%%%%%%%%%%%%%%%%%%%%%%%%%%%%%%%%%%%%%%%%%%%%%%%%%%%%%%%%%%%%%%%%%%%%%%%%%%%%%%%%%%%%%%%%%%%%%%%%%%%%%%%%%%%%%%%%%%%%%%%%%%%%%%%%%%%%%%%%%%%%%%%%%%%%%%%%%%%%%%%%%%%%%%%%%%%%%%%%%%%%%%%%%%%%%%%%%%%%%%%%%%%%%%%%%%%%%%%%%%%%%%%%%%%%%%%%%%%%%%%%%%%%%%%%%%%%%%%%%%%%%%%%%%%%%%%%%%%%%%%%%%%%%%%%%%%%%%%%%%%%%%%%%%%%%%%%%%%%%%%%%%%%%%%%%%%%%%%%%%%%%%%%%%%%%%%%%%%%%%%%%%%%%%%%%%%%%%%%%%%%%%%%%%%%%%%%%%%%%%%%%
\section{The case of Fibonacci numbers squared}\label{fibsquared}

The main goal of this section is to prove an analog of Theorem \ref{o1} for Fibonacci numbers squared. Our proof uses Cassini's identity and the Fibonacci recurrence relation to make similar terms appear and cancel themselves out.  

\begin{proof}[Proof of Proposition \ref{p1}]
    Let $k \ge 0$ such that $k \equiv 0 \mod 3$. Assume that $F_{k}$ is even, and $F_{k+1}$ and $F_{k+2}$ are odd. By the linear recurrence,
    \begin{align*}
        F_{k+3} \ = \ F_{k+1} + F_{k+2} \ \equiv \ 1 + 1 \ \equiv \ 0 \mod 2, \\
        F_{k+4} \ = \ F_{k+2} + F_{k+3} \ \equiv \ 1 + 0 \ \equiv \ 1 \mod 2, \\
        F_{k+5} \ = \ F_{k+3} + F_{k+4} \ \equiv \ 0 + 1 \ \equiv \ 1 \mod 2.
    \end{align*}
    Since $F_0 = 0$ and $F_1 = F_2 = 1$, Proposition \ref{p1} holds inductively.
\end{proof}

\begin{proof}[Proof of Theorem \ref{m1}]
We start by proving \eqref{eq3}. 
By Cassini's identity for odd $n$,
    $$
    F_{n}^2 - 1 \ = \ F_{n-1} F_{n+1}.
    $$
    Hence,
    $$
    F_{n}^4 - 2F_n^2 + 1 \ = \ F_{n-1}^2 F_{n+1}^2, 
    $$
    which gives
    $$
    F_{n}^4 - 2F_{n}^2 + 1 - F_{n+1}^2 \cdot F_{n-1}^2 + (1 + F_{n+1}^2 \cdot F_{n}^2 - F_{n+1}^2 - F_n^2) \ = \ (1 + F_{n+1}^2 \cdot F_{n}^2 - F_{n+1}^2 - F_n^2).
    $$
    Therefore,
    $$
    2 + ( F_n^2 - 3) \cdot F_n^2 + (F_n^2 -F_{n-1}^2 - 1) \cdot F_{n+1}^2 \ = \ (F_{n+1}^2 - 1 ) (F_{n}^2 - 1).
    $$
    We obtain
    $$
    1 + \frac{F_n^2 - 3}{2} \cdot F_n^2 + \frac{F_n^2 - F_{n-1}^2 - 1}{2} \cdot F_{n+1}^2 \ = \ \frac{(F_{n}^2 - 1) ( F_{n+1}^2 - 1)}{2}. 
    $$

Next, we prove \eqref{eq4}. We again start with Cassini's identity for even $n$:
    $$
    F_{n}^2 + 1 \ = \ F_{n-1} F_{n+1}.
    $$
    Hence,
    $$
    F_n^4 + 2F_n^2 + 1 \ = \ F_{n-1}^2 F_{n+1}^2,
    $$
    which gives
    $$
    F_n^4 + F_n^2 + 1 - F_{n-1}^2 F_{n+1}^2 + (1 + F_{n+1}^2 F_n^2 - F_{n+1}^2) \ = \ - F_n^2 + (1 + F_{n+1}^2 F_n^2 - F_{n+1}^2).
    $$
    The above implies that
    $$
    2 + ( F_n^2 + 1) \cdot F_n^2 + (F_n^2 -F_{n-1}^2 - 1) \cdot F_{n+1}^2 \ = \ (F_{n+1}^2 - 1 ) (F_{n}^2 - 1).
    $$
    Dividing both sides by $2$, we obtain 
    $$
    1 + \frac{F_n^2 + 1}{2} \cdot F_n^2 + \frac{F_n^2 - F_{n-1}^2 - 1}{2} \cdot F_{n+1}^2 \ = \ \frac{(F_{n}^2 - 1) ( F_{n+1}^2 - 1)}{2}. 
    $$

It remains to prove \eqref{eq5}. 
    On the one hand, from $F_n = F_{n+1} - F_{n-1}$, we have
    $$
        F_n^2 \ = \ F_{n+1}^2 + F_{n-1}^2 - 2 F_{n+1} F_{n-1}.
    $$
    Hence,
    $$
    F_n^2 - F_{n-1}^2 - F_{n+1}^2 \ =\ - 2 F_{n-1} F_{n+1},
    $$
    and so,
    \begin{equation}\label{eq10}
    F_n^2 (F_n^2 - F_{n-1}^2 - F_{n+1}^2) \ = \ - 2 F_n^2 F_{n-1} F_{n+1}.
    \end{equation}
    On the other hand, by Cassini's identity,
    $$
    \left( F_{n-1} F_{n+1} -  F_{n}^2 \right)^2 \ =\ 1,
    $$
    which gives
    \begin{equation}\label{eq11}
        F_n^4- 2 F_n^2 F_{n+1} F_{n-1}\ =\ 1 - F_{n-1}^2 F_{n+1}^2.
    \end{equation}
    It follows from \eqref{eq10} and \eqref{eq11} that
    \begin{align*}
        2F_n^4 - F_n^2 F_{n-1}^2 - F_n^2 F_{n+1}^2 &\ = \ F_n^4 - 2 F_n^2 F_{n-1} F_{n+1}\\
        &\ = \ 1 - F_{n-1}^2 F_{n+1}^2.
    \end{align*}
    Equivalently,
    $$
    2 F_n^4 - F_n^2 F_{n-1}^2 + F_{n-1}^2 F_{n+1}^2 \ = \ 1 + F_{n}^2 F_{n+1}^2,
    $$
    from which we know that
    $$
    2 F_n^4 - F_n^2 F_{n-1}^2 + F_{n-1}^2 F_{n+1}^2 + (- F_n^2 - F_{n+1}^2) \ = \ 1 + F_{n}^2 F_{n+1}^2 + (- F_n^2 - F_{n+1}^2),
    $$
    Therefore,
    $$
    (2F_n^2 - F_{n-1}^2 - 1) \cdot F_n^2 + (F_{n-1}^2 - 1) \cdot F_{n+1}^2 \ = \ (F_n^2 - 1) (F_{n+1}^2 - 1).
    $$
    Dividing both sides by $2$, we obtain 
    $$
    \left(F_n^2 - \frac{F_{n-1}^2 +1}{2} \right) \cdot F_n^2 + \frac{F_{n-1}^2 - 1}{2}  \cdot F_{n+1}^2 \ = \ \frac{(F_n^2 - 1) (F_{n+1}^2 - 1)}{2},
    $$
    as desired. 
\end{proof}

%%%%%%%%%%%%%%%%%%%%%%%%%%%%%%%%%%%%%%%%%%%%%%%%%%%%%%%%%%%%%%%%%%%%%%%%%%%%%%%%%%%%%%%%%%%%%%%%%%%%%%%%%%%%%%%%%%%%%%%%%%%%%%%%%%%%%%%%%%%%%%%%%%%%%%%%%%%%%%%%%%%%%%%%%%%%%%%%%%%%%%%%%%%%%%%%%%%%%%%%%%%%%%%%%%%%%%%%%%%%%%%%%%%%%%%%%%%%%%%%%%%%%%%%%%%%%%%%%%%%%%%%%%%%%%%%%%%%%%%%%%%%%%%%%%%%%%%%%%%%%%%%%%%%%%%%%%%%%%%%%%%%%%%%%%%%%%%%%%%%%%%%%%%%%%%%%%%%%%%%%%%%%%%%%%%%%%%%%%%%%%%%%%%%%%%%%%%%%%%%%%%%%%%%%%%
\section{The case of Fibonacci numbers cubed}\label{fibcubed}

This section proves an analog of Theorem \ref{o1} for Fibonacci numbers cubed. As for the case of Fibonacci numbers squared, our proof uses Cassini's identity and the Fibonacci recurrence relation. Our proofs also employ the identities presented by Frontczak \cite{F} to replace the sum of cubes with a simpler expression and use the well-known identity $F_{3n} = 5F_n^3 + 3(-1)^nF_n$ (see \cite[(2.6)]{F}).

\begin{thm} \cite[cf.\ Theorem 1]{F}\label{km}
    For each $n \geq 1$,
    \begin{equation}\label{e2}
    \sum_{k=1}^n F_{k}^3 \ = \ \frac{1}{4} \left(F_{3n+3} + F_{3n}\right)  - F_{n+1}^3 - F_n^3 + \frac{1}{2},
    \end{equation}
    and
    \begin{equation}\label{e3}
        \sum_{k=1}^n (-1)^k F_{k}^3 \ = \
         \frac{1}{4} \left( (-1)^n F_{3n+3} + (-1)^{n+1} F_{3n}\right) -  (-1)^{n} F_{n+1}^3 - (-1)^{n+1} F_{n}^3 + \frac{1}{2}.
    \end{equation}
\end{thm}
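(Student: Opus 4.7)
The plan is to reduce each identity to an evaluation of sums of pure (or alternating) Fibonacci numbers, using the cubic identity $F_{3k}=5F_k^3+3(-1)^k F_k$ highlighted just above the theorem. Solving for the cube gives $F_k^3=\tfrac{1}{5}(F_{3k}-3(-1)^k F_k)$, hence
\[
\sum_{k=1}^n F_k^3 \;=\; \frac{1}{5}\sum_{k=1}^n F_{3k} \;-\; \frac{3}{5}\sum_{k=1}^n (-1)^k F_k,
\]
and, multiplying by $(-1)^k$ before summing,
\[
\sum_{k=1}^n (-1)^k F_k^3 \;=\; \frac{1}{5}\sum_{k=1}^n (-1)^k F_{3k} \;-\; \frac{3}{5}\sum_{k=1}^n F_k.
\]
The first task is to evaluate the four elementary sums on the right; the second is to match the resulting closed forms with the displays in \eqref{e2} and \eqref{e3}.

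For the elementary sums I would use the standard $\sum_{k=1}^n F_k = F_{n+2}-1$ and the one-line induction $\sum_{k=1}^n (-1)^k F_k = (-1)^n F_{n-1}-1$. For the tripled-index sums I rely on the telescoping identities $F_{3k+2}-F_{3k-1}=2F_{3k}$ and $F_{3k+1}+F_{3k-2}=2F_{3k}$, both immediate from the Fibonacci recurrence. These yield
\[
\sum_{k=1}^n F_{3k} \;=\; \frac{F_{3n+2}-1}{2}, \qquad \sum_{k=1}^n (-1)^k F_{3k} \;=\; \frac{(-1)^n F_{3n+1}-1}{2},
\]
where for the alternating version one rewrites $(-1)^k F_{3k-2}=-(-1)^{k-1}F_{3(k-1)+1}$ to get a genuine telescope.

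Substituting these evaluations back produces closed forms for the two target sums in terms of $F_{3n+2},F_{n-1}$ and $F_{3n+1},F_{n+2}$ respectively. To convert them into the shapes of \eqref{e2} and \eqref{e3}, I apply the cubic identity once more at indices $n$ and $n+1$: combined with the recurrence consequences $F_{3n+3}+F_{3n}=2F_{3n+2}$ and $F_{3n+3}-F_{3n}=2F_{3n+1}$, this gives the two relations
\[
F_{3n+3}+F_{3n}\;=\;5(F_{n+1}^3+F_n^3)-3(-1)^n F_{n-1}, \qquad F_{3n+3}-F_{3n}\;=\;5(F_{n+1}^3-F_n^3)-3(-1)^n F_{n+2},
\]
which let me trade $F_{3n+2}$ together with its linear correction $F_{n-1}$ for $\tfrac14(F_{3n+3}+F_{3n})$ minus $F_{n+1}^3+F_n^3$ (and analogously in the alternating case), producing exactly the displayed right-hand sides with the $+\tfrac12$ constant recovered from the telescoping boundary terms.

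The main obstacle is sign bookkeeping: the $(-1)^n$ and $(-1)^{n+1}$ factors attached to $F_{3n+3}$, $F_{3n}$, $F_{n+1}^3$, and $F_n^3$ in \eqref{e3} must be tracked consistently through three successive invocations of the cubic identity and the two telescoping evaluations. No ingredient beyond the single cubic identity from \cite{F} is needed.
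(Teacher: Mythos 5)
Your proposal is correct, but there is nothing in the paper to compare it against: the paper states Theorem \ref{km} as an imported result, citing Frontczak's Theorem 1, and gives no proof of it. What you have written is therefore a self-contained derivation, and it checks out. The inversion $F_k^3=\tfrac{1}{5}\bigl(F_{3k}-3(-1)^kF_k\bigr)$ of the cubic identity correctly reduces both sums to the four elementary sums; your closed forms $\sum_{k=1}^nF_{3k}=\tfrac{1}{2}(F_{3n+2}-1)$ and $\sum_{k=1}^n(-1)^kF_{3k}=\tfrac{1}{2}((-1)^nF_{3n+1}-1)$ follow from the telescopes $F_{3k+2}-F_{3(k-1)+2}=2F_{3k}$ and $(-1)^kF_{3k+1}-(-1)^{k-1}F_{3(k-1)+1}=(-1)^k\cdot 2F_{3k}$ as you indicate; and the final matching works because $F_{3n+3}+F_{3n}=2F_{3n+2}$ and $F_{3n+3}-F_{3n}=2F_{3n+1}$, so both sides of \eqref{e2} reduce to $\tfrac{1}{10}F_{3n+2}-\tfrac{3}{5}(-1)^nF_{n-1}+\tfrac{1}{2}$ and both sides of \eqref{e3} to $\tfrac{1}{10}(-1)^nF_{3n+1}-\tfrac{3}{5}F_{n+2}+\tfrac{1}{2}$. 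The only point worth flagging is that your argument uses the cubic identity $F_{3n}=5F_n^3+3(-1)^nF_n$ as its sole nontrivial input, which is itself quoted from the same source \cite{F}; that is perfectly legitimate (the identity has elementary proofs via Binet's formula or induction), but it means your proof is a derivation of Frontczak's Theorem 1 from his equation (2.6) rather than a fully from-scratch argument. As an alternative sanity check, both \eqref{e2} and \eqref{e3} also admit direct two-line induction proofs on $n$ using $F_{3n+6}=4F_{3n+3}+F_{3n}$, which avoids the cubic identity entirely.
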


\begin{proof}[Proof of \eqref{eq12}]
It follows from \eqref{e2} and \eqref{e3} that
    \begin{align*}
\sum_{k=1}^{2m-1} (-1)^{k-1} F_k^3 &\ = \ - \sum_{k=1}^{2m-1} (-1)^{k} F_k^3 \ = \ \frac{F_{6m}}{4} - \frac{F_{6m-3}}{4} - F^3_{2m} + F^3_{2m-1} - \frac{1}{2},\mbox{ and }\\
   \sum_{k=2}^{2m-2} F_k^3 &\ = \ -1 + \sum_{k=1}^{2m-2} F_k^3 \ = \ \frac{F_{6m-3}}{4} + \frac{F_{6m-6}}{4} - F_{2m-1}^3 - F_{2m-2}^3 - \frac{1}{2}.
    \end{align*}
Hence, the left-hand side of \eqref{eq12} becomes
    \begin{align*}
        T_m&\ :=\ \left( \frac{F_{6m}}{4} - \frac{F_{6m-3}}{4} - F_{2m}^3 + F_{2m-1}^3 - \frac{1}{2} \right) F_{2m-1}^3 + \\
        & \quad\quad\quad\quad\quad\quad\quad\quad\quad\left(\frac{F_{6m-3}}{4} + \frac{F_{6m-6}}{4} - F_{2m-1}^3 - F_{2m-2}^3 - \frac{1}{2} \right) F_{2m}^3 \\
        &\ = \  \frac{F_{6m} - F_{6m-3}}{4} F_{2m-1}^3 + F_{2m-1}^6 + \frac{F_{6m-3} + F_{6m-6}}{4} F_{2m}^3 - \\
        & \quad\quad\quad\quad\quad\quad\quad\quad\quad F_{2m-2}^3 F_{2m}^3- 2 F_{2m-1}^3 F_{2m}^3 - \frac{F_{2m-1}^3 + F_{2m}^3}{2}.
    \end{align*}
    Then the identity $T_m = (F_{2m-1}^3 - 1) (F_{2m}^3 - 1) / 2$ is equivalent to 
    \begin{align}\label{e4}
    {(F_{6m} - F_{6m-3})} F_{2m-1}^3 + 4 F_{2m-1}^6 & + {(F_{6m-3} + F_{6m-6})} F_{2m}^3\nonumber\\
    &- 4 F_{2m-2}^3 F_{2m}^3- {10 F_{2m-1}^3 F_{2m}^3} \ =\ 2.
    \end{align}
    Since $F_{3m} = 5F_m^3 + 3(-1)^mF_m$, we write 
    $$
        F_{6m} \ =\ 5F_{2m}^3 + 3F_{2m}, \mbox{ } F_{6m-3} \ =\ 5F_{2m-1}^3 - 3F_{2m-1}, \mbox{ and }F_{6m-6} \ =\ 5F_{2m-2}^3 + 3F_{2m-2}.
    $$
    Hence, 
    \begin{equation}\label{e5}
    (F_{6m} - F_{6m - 3}) F_{2m-1}^3 \ =\ 5F_{2m}^3 F_{2m-1}^3 + 3F_{2m} F_{2m-1}^3 - 5F_{2m-1}^6 + 3 F_{2m-1}^4,
    \end{equation}
    and
    \begin{equation}\label{e6}
    {(F_{6m-3} + F_{6m-6})} F_{2m}^3 = 5F_{2m-1}^3 F_{2m}^3 - 3F_{2m-1} F_{2m}^3 +5F_{2m-2}^3 F_{2m}^3 + 3F_{2m-2} F_{2m}^3.
    \end{equation}
    Using \eqref{e5} and \eqref{e6}, we can rewrite the left-hand side of \eqref{e4} as 
    $$
         S_m\ :=\ 3 F_{2m} F_{2m-1}^3 - F_{2m-1}^6 + 3 F_{2m-1}^4 - 3 F_{2m-1} F_{2m}^3 + F_{2m-2}^3 F_{2m}^3 + 3 F_{2m-2} F_{2m}^3.
    $$
    By Cassini's identity,
    $$
    F_{2m-2} F_{2m} \ = \ (-1)^{2m-1} + F_{2m-1}^2 \ = \ -1 + F_{2m-1}^2,
    $$
    which gives
    $$
    F_{2m-2}^3 F_{2m}^3 \ = \ \left( -1 + F_{2m-1}^2 \right)^3 \ = \ -1 + 3 F_{2m-1}^2 - 3 F_{2m-1}^4 + F_{2m-1}^6,
    $$
    and 
    $$
    3F_{2m-2}F_{2m}^3 \ = \ 3F_{2m}^2 \left( -1 + F_{2m-1}^2 \right) \ = \ - 3F_{2m}^2 + 3 F_{2m}^2 F_{2m-1}^2.
    $$
    Therefore,
    \begin{align*}
        S_m &\ = \ 3F_{2m} F_{2m-1}^3 - 3 F_{2m-1} F_{2m}^3 - 1 + 3 F_{2m-1}^2 - 3 F_{2m}^2 + 3 F_{2m}^2 F_{2m-1}^2 \\
        &\ = \ 3 F_{2m}F_{2m-1} \left(F_{2m-1}^2 - F_{2m}^2 + F_{2m} F_{2m-1} \right) + 3 F_{2m-1}^2 - 3 F_{2m}^2 - 1 \\
        &\ = \ 3 F_{2m} F_{2m-1} \left(F_{2m-1}^2 - F_{2m}^2 + F_{2m} ( F_{2m} - F_{2m-2} ) \right) + 3 F_{2m-1}^2 - 3 F_{2m}^2 - 1 \\
        &\ = \ 3 F_{2m} F_{2m-1} \left(F_{2m-1}^2 - F_{2m} F_{2m-2} \right) + 3 F_{2m-1}^2 - 3 F_{2m}^2 - 1 \\
        &\ = \  3 F_{2m} F_{2m-1} + 3 F_{2m-1}^2 - 3 F_{2m}^2 - 1\\
        &\ =\ 3((F_{2m+1}-F_{2m-1})F_{2m-1} + F^2_{2m-1} - F^2_{2m}) - 1\\
        &\ =\ 3(F_{2m+1}F_{2m-1} - F^2_{2m}) - 1\ =\ 2,
    \end{align*}
as desired. 
\end{proof}

\begin{proof}[Proof of \eqref{eq13}] Using Theorem \ref{km}, we can rewrite
the left-hand side of \eqref{eq13} as
\begin{align*}T_m\ := \ 1 &+ \left(\frac{1}{4}(F_{6m+3}-F_{6m})-F^3_{2m+1} + F^3_{2m}-\frac{1}{2}\right)F^3_{2m}\\
&+ \left(\frac{1}{4}(F_{6m}+F_{6m-3})-F^3_{2m} - F_{2m-1}^3-\frac{1}{2}\right)F_{2m+1}^3.\end{align*}
Hence, $T_m = (F_{2m}^3-1)(F_{2m+1}^3-1)/2$ is equivalent to
\begin{equation}\label{eq14}4F_{2m}^6-10F_{2m}^3F_{2m+1}^3-4F_{2m-1}^3F_{2m+1}^3 + F_{2m}^3(F_{6m+3}-F_{6m}) + F_{2m+1}^3(F_{6m}+F_{6m-3}) = -2.\end{equation}
We now use the identity $F_{3m} = 5F_m^3 + 3(-1)^mF_m$  to write
$$
        F_{6m-3} \ =\ 5F_{2m-1}^3 - 3F_{2m-1}, \mbox{ } F_{6m} \ =\ 5F_{2m}^3 + 3F_{2m}, \mbox{ and }F_{6m+3} \ =\ 5F_{2m+1}^3 - 3F_{2m+1}.
$$
Hence, \eqref{eq14} becomes
\begin{equation}\label{eq15}-F_{2m}^6-3F_{2m}^4-3F_{2m+1}F_{2m}^3 + 3F_{2m+1}^3F_{2m} + F_{2m-1}^3F_{2m+1}^3 - 3F_{2m+1}^3F_{2m-1} \ =\ -2.\end{equation}

By Cassini's identity, 
\begin{equation}\label{eq16}F_{2m-1}F_{2m+1}\ =\ F_{2m}^2 + 1,\end{equation}
so
$$F_{2m+1}^3F^3_{2m-1} \ =\ F^6_{2m} + 3F_{2m}^4+3F_{2m}^2 + 1,$$
and 
\begin{equation}\label{eq17}3F_{2m+1}^3F_{2m-1}\ =\ 3F_{2m+1}^2(F_{2m}^2+1)\ =\ 3F_{2m}^2F_{2m+1}^2 + 3F_{2m+1}^2.\end{equation}
Let $S_m$ be the left side of \eqref{eq15}. By \eqref{eq16} and \eqref{eq17}, we have
\begin{align*}
S_m&\ =\ -3F_{2m+1}F^3_{2m} + 3F^3_{2m+1}F_{2m} + 3F^2_{2m} + 1 - 3F^2_{2m}F^2_{2m+1} - 3F^2_{2m+1}\\
&\ =\ 3F_{2m}F_{2m+1}(F^2_{2m+1}-F^2_{2m} - F_{2m}F_{2m+1}) + 3F^2_{2m} - 3F^2_{2m+1} + 1\\
&\ =\ 3F_{2m}F_{2m+1}(F^2_{2m+1}-F^2_{2m} - (F_{2m+1}-F_{2m-1})F_{2m+1}) + 3F^2_{2m} - 3F^2_{2m+1} + 1\\
&\ =\ 3F_{2m}F_{2m+1}(-F^2_{2m} + F_{2m-1}F_{2m+1}) + 3F^2_{2m} - 3F^2_{2m+1} + 1\\
&\ =\ 3F_{2m}F_{2m+1} + 3F^2_{2m} - 3F^2_{2m+1} + 1\\
&\ =\ 3\left((F_{2m+1}-F_{2m-1})F_{2m+1} + F_{2m}^2 - F^2_{2m+1}\right) + 1\\
&\ =\ 3(F^2_{2m}-F_{2m-1}F_{2m+1}) + 1\ =\ -2,
\end{align*}
as desired. 
\end{proof}

%%%%%%%%%%%%%%%%%%%%%%%%%%%%%%%%%%%%%%%%%%%%%%%%%%%%%%%%%%%%%%%%%%%%%%%%%%%%%%%%%%%%%%%%%%%%%%%%%%%%%%%%%%%%%%%%%%%%%%%%%%%%%%%%%%%%%%%%%%%%%%%%%%%%%%%%%%%%%%%%%%%%%%%%%%%%%%%%%%%%%%%%%%%%%%%%%%%%%%%%%%%%%%%%%%%%%%%%%%%%%%%%%%%%%%%%%%%%%%%%%%%%%%%%%%%%%%%%%%%%%%%%%%%%%%%%%%%%%%%%%%%%%%%%%%%%%%%%%%%%%%%%%%%%%%%%%%%%%%%%%%%%%%%%%%%%%%%%%%%%%%%%%%%%%%%%%%%%%%%%%%%%%%%%%%%%%%%%%%%%%%%%%%%%%%%%%%%%%%%%%%%%%%%%%%%
\section{Another pair of Diophantine equations}\label{anotherpair}

Up until now, we have been working with Equations (\ref{eq1}) and (\ref{eq2}) given in Theorem \ref{o1}, which admit a unique nonnegative, integral solution. We now turn our attention to the question of the existence of a distinct set of equations that instead admits a unique positive, integral solution. Such a system can be found trivially by shifting the equations in Theorem \ref{o1} by $a+b$. 

\begin{prop}
For relatively prime $a, b\in \mathbb{N}$, exactly one of the following equations has a nonnegative, integral solution:
\begin{align}
ax + by &\ =\ \frac{(a-1)(b-1)}{2} + (a + b),\\
 ax + by + 1 &\ =\ \frac{(a-1)(b-1)}{2} + (a + b).
\end{align}
Furthermore, the solution is unique. 
\end{prop}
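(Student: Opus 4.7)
The plan is to reduce the claim to Theorem~\ref{o1} via the bijective substitution $u = x - 1$ and $v = y - 1$. Let $N = (a-1)(b-1)/2$. The first shifted equation $ax + by = N + (a+b)$ then becomes $au + bv = N$, and the second, $ax + by + 1 = N + (a+b)$, becomes $au + bv + 1 = N$; these are formally \eqref{eq1} and \eqref{eq2} in the variables $(u,v)$. Theorem~\ref{o1} guarantees that exactly one of the latter pair admits a unique nonnegative integer solution $(u_0, v_0)$, so $(x_0, y_0) := (u_0+1, v_0+1)$ is a solution with $x_0, y_0 \ge 1$ to exactly one of the two shifted equations, and is the only such solution with both coordinates at least $1$.

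The remaining task is to handle nonnegative solutions that lie on the axes ($x=0$ or $y=0$), since under the substitution these correspond to lattice points with a coordinate equal to $-1$ and hence fall outside the scope of Theorem~\ref{o1}. If $x = 0$, the first shifted equation reduces to $by = N + a + b = (a+1)(b+1)/2$, which admits a nonnegative integer $y$ precisely when $2b \mid (a+1)(b+1)$; the second shifted equation at $x=0$ reads $by = (a+1)(b+1)/2 - 1$. A symmetric analysis with $a$ and $b$ swapped governs $y=0$. These four divisibility questions can be resolved using $(a+1)(b+1) \equiv a+1 \pmod{b}$ together with a parity comparison involving $a+b+1$, so each boundary case is determined by explicit congruence conditions on $(a,b)$.

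The main obstacle is to show that these potential boundary contributions do not introduce nonnegative solutions beyond the one already produced by Theorem~\ref{o1}. The plan is to combine the explicit divisibility tests above with the ``exactly one'' dichotomy of Theorem~\ref{o1} applied to $N$ and $N-1$, forcing consistency in every parity class of $(a,b)$; whenever a boundary solution exists, one checks that it sits in the \emph{same} shifted equation picked out by Theorem~\ref{o1} and actually coincides with the pulled-back solution $(x_0, y_0)$. If this case analysis closes cleanly, the claim follows in the full nonnegative quadrant as worded; if it does not, the natural form of the statement is for positive integer solutions, consistent with the motivating paragraph directly preceding the proposition.
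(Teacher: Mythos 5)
Your first paragraph is exactly the paper's argument: the paper offers no written proof of this proposition, merely remarking that the system ``can be found trivially by shifting the equations in Theorem \ref{o1} by $a+b$,'' which is precisely your substitution $u = x-1$, $v = y-1$. That paragraph is a complete and correct proof that exactly one of the two shifted equations has a \emph{positive} integral solution and that this solution is unique.

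Your worry about the boundary, however, is not a loose end that can be closed --- it is fatal to the statement as literally worded. Take $(a,b) = (3,5)$, so $(a-1)(b-1)/2 + (a+b) = 4 + 8 = 12$. Then $3x + 5y = 12$ has the nonnegative solution $(4,0)$, while $3x + 5y + 1 = 12$ has the nonnegative solution $(2,1)$, so \emph{both} equations admit nonnegative integral solutions. Uniqueness can fail as well: for $(a,b) = (1,2)$ the first equation becomes $x + 2y = 3$, with the two nonnegative solutions $(3,0)$ and $(1,1)$. Hence the case analysis you sketch in your second and third paragraphs cannot ``close cleanly,'' and no congruence conditions on $(a,b)$ will rescue the nonnegative version. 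The escape hatch in your final sentence is the correct reading: the proposition should say ``positive'' rather than ``nonnegative,'' consistent with the motivating sentence that precedes it in the paper, and under that reading your first paragraph already finishes the proof.
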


Our next theorem presents a system that is not a linear shift of the equations in Theorem \ref{o1}.

\begin{thm}\label{m3}
Let $a, b \in \mathbb{N}$ satisfy $(a, b) = 1$, $b \geq 2$, and $a$ is odd. Consider the two following equations:
    \begin{align}
        \label{eq18}ax + by \ = \ \frac{(a+1)b}{2} + 1, \\
        \label{eq19}ax + by \ = \ \frac{(a+1)b}{2} - 1.
    \end{align}
    Exactly one of the two equations has a positive, integral solution, and the solution is unique.
\end{thm}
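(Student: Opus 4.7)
The plan is to reduce the problem to a question about the modular inverse of $a$ modulo $b$. Since $a$ is odd, $(a+1)/2$ is an integer, so the right-hand sides of \eqref{eq18} and \eqref{eq19} are integers. Reducing modulo $b$, any solution to \eqref{eq18} (resp.\ \eqref{eq19}) must satisfy $ax \equiv 1 \pmod{b}$ (resp.\ $ax \equiv -1 \pmod{b}$). Since $\gcd(a,b)=1$, let $x^*$ be the unique element of $\{1, 2, \ldots, b-1\}$ with $ax^* \equiv 1 \pmod{b}$. Then any integral solution to \eqref{eq18} has $x \equiv x^* \pmod{b}$, while any to \eqref{eq19} has $x \equiv b - x^* \pmod{b}$.

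I would next observe that in a positive solution, $x$ must be the smallest positive representative of its congruence class, namely $x^*$ for \eqref{eq18} and $b - x^*$ for \eqref{eq19}. Indeed, $y \geq 1$ forces $ax \leq M_\pm - b$, where $M_\pm := (a+1)b/2 \pm 1$, and a routine check shows $(M_\pm - b)/a < b$ whenever $b \geq 2$, so $x \in \{1, \ldots, b-1\}$. This pins $x$ down uniquely within each equation, settling uniqueness. Substituting back and clearing denominators, the requirement $y \geq 1$ translates to
\begin{align*}
\text{\eqref{eq18} admits a positive integral solution} \ &\iff \ 2ax^* \leq (a-1)b + 2,\\
\text{\eqref{eq19} admits a positive integral solution} \ &\iff \ 2ax^* \geq (a+1)b + 2.
\end{align*}
Since $(a-1)b + 2 < (a+1)b + 2$ for $b \geq 1$, the two ranges are disjoint, so at most one of the equations has a positive solution.

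The main obstacle, and the step where the hypothesis that $a$ is odd plays its essential role, is showing that at least one of the two ranges contains $2ax^*$. Suppose for contradiction that $(a-1)b + 2 < 2ax^* < (a+1)b + 2$. Writing $ax^* = 1 + kb$ for the unique integer $k$ afforded by $ax^* \equiv 1 \pmod{b}$, this inclusion simplifies to $k \in \bigl(\tfrac{a-1}{2},\ \tfrac{a+1}{2}\bigr)$. But $a$ odd means $\tfrac{a-1}{2}$ and $\tfrac{a+1}{2}$ are consecutive integers, so no integer $k$ lies strictly between them, a contradiction. Hence exactly one of \eqref{eq18} and \eqref{eq19} admits a positive integral solution, and it is unique.
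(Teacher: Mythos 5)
Your proof is correct and follows essentially the same route as the paper's: both reduce modulo $b$ to pin the $x$-coordinate of any positive solution to a unique residue in $\{1,\dots,b-1\}$ (your $x^*$ and $b-x^*$ are the paper's $r_1$ and $r_2$, which satisfy $r_1+r_2=b$), and both get uniqueness from $x<b$ combined with $b \mid (x_1-x_2)$. The only real difference is in how the existence dichotomy is finished: the paper observes that the two associated $y$-values satisfy $s_1+s_2=1$, so exactly one is positive, whereas you show $2ax^*$ cannot lie strictly between $(a-1)b+2$ and $(a+1)b+2$ because $a$ odd makes $(a\pm 1)/2$ consecutive integers --- an equivalent use of the oddness hypothesis.
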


The following lemma shall be used in due course. 

\begin{lem}\label{l1}
    For integers $n, x, y, a, b$ with $a, b$ positive and $(a, b) = 1$, we consider the equation $xa + yb = n$. If there is a solution $(x,y) = (r, s)\in \mathbb{Z}^2$ with $r < b$ and $s \leq 0$, then there are no solutions with $x, y$ both positive.
\end{lem}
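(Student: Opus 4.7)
The plan is to parameterize all integer solutions of $xa + yb = n$ starting from the given particular solution $(r,s)$, and then show that simultaneous positivity forces contradictory bounds on the parameter.

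First I would invoke the standard fact that, since $\gcd(a,b) = 1$, the homogeneous equation $xa + yb = 0$ has integer solutions precisely of the form $(bt, -at)$ with $t \in \mathbb{Z}$. Consequently, every integer solution of $xa + yb = n$ has the shape
\[
(x,y) \ = \ (r + bt,\; s - at), \qquad t \in \mathbb{Z}.
\]

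Next I would translate the desired inequalities $x > 0$ and $y > 0$ into conditions on $t$. From $r + bt > 0$ we get $t > -r/b$; since $r < b$ and $b > 0$, the bound $-r/b > -1$ forces any integer $t$ satisfying this strict inequality to satisfy $t \geq 0$. From $s - at > 0$ we get $t < s/a$; since $s \leq 0$ and $a > 0$, we have $s/a \leq 0$, so any integer $t$ satisfying this strict inequality must satisfy $t \leq -1$ (this step handles the boundary case $s = 0$ uniformly, since strictness is preserved).

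The two requirements $t \geq 0$ and $t \leq -1$ are incompatible, so no integer $t$ produces a positive solution, which is exactly the claim of the lemma. I do not anticipate any real obstacle here: the only subtlety is keeping the strict versus non-strict inequalities straight, particularly when $s = 0$, but the condition $y > 0$ rather than $y \geq 0$ ensures the argument closes without exception.
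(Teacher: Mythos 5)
Your proposal is correct and follows essentially the same route as the paper: both parameterize the solution set as $(r+bt,\, s-at)$ and derive incompatible constraints on $t$ from the positivity of $x$ and $y$. Your version merely spells out the inequality bookkeeping ($t \geq 0$ versus $t \leq -1$) that the paper's one-line argument leaves implicit.
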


\begin{proof}
    All integral solutions are of the form $(x, y) = (r + tb, s - ta)$ for some $t \in \mathbb{Z}$. To get $y > 0$, we must have $t < 0$, but that implies $x < 0$.
\end{proof}

\begin{proof}[Proof of Theorem \ref{m3}]
    Let $k = (a + 1) b /2$. Let $1 \leq r_1 \leq b - 1$ be chosen such that $ar_1 \equiv k+1 \mod b$ and $s_1 \coloneq (k+1 - ar_1)/b$. Let $1 \leq r_2 \leq b - 1$ be chosen such that $ar_2 \equiv k - 1 \mod b$ and $s_2 \coloneq (k - 1 - ar_2) / b$. Observe that 
    $$
    a(r_1 + r_2) \ = \ ar_1 + ar_2 \ \equiv \ 2k \ = \ (a+1)b \ \equiv \ 0 \mod b.
    $$
    It follows from $(a, b) = 1$ that $b \mid (r_1 + r_2)$. Since $2 \leq r_1 + r_2 \leq 2b - 2$, we know that $r_1 + r_2 = b$. Hence,
    $$
    s_1 + s_2 \ = \ \frac{(k+1 - ar_1) + (k-1 - ar_2)}{b} \ = \ \frac{2k - a(r_1 + r_2)}{b} \ = \ \frac{(a+1)b - ab}{b} \ = \ 1.
    $$
    Thus, exactly one of $s_1, s_2$ is positive. By definition, $r_1 a + s_1 b = k + 1$ and $r_2 a + s_2 b = k - 1$. Suppose, without loss of generality, that \eqref{eq18} has a positive solution. We know that \eqref{eq19} has no positive solution due to Lemma \ref{l1}.

    It remains to show that \eqref{eq18} has at most one positive solution. Let $(x_1, y_1)$ and $(x_2, y_2)$ be two positive solutions of \eqref{eq18}. Observe that
    $$
    x_1, x_2\ \leq \ \frac{(a+1) b}{2} - 1,
    $$
    which gives
    $$
    x_1, x_2 \ \leq \ \frac{b}{2} + \frac{b}{2a} - \frac{1}{a} \ < \ b.
    $$
    Hence, $1 \leq x_1, x_2 \leq b - 1$, and so, $|x_1 - x_2| \leq b-2$. Furthermore, $x_1 a + y_1 b = x_2 a + y_2 b$ implies that
    $$
    (x_1 - x_2) a \ = \ - (y_1 - y_2) b.
    $$
    Since $(a, b) = 1$, $b$ divides $x_1 - x_2$, which, in combination with $|x_1 - x_2| \le b - 2$, implies that $x_1 = x_2$. As a result, $(x_1, y_1) = (x_2, y_2)$. Therefore, \eqref{eq18} has at most one positive solution.
\end{proof}

%%%%%%%%%%%%%%%%%%%%%%%%%%%%%%%%%%%%%%%%%%%%%%%%%%%%%%%%%%%%%%%%%%%%%%%%%%%%%%%%%%%%%%%%%%%%%%%%%%%%%%%%%%%%%%%%%%%%%%%%%%%%%%%%%%%%%%%%%%%%%%%%%%%%%%%%%%%%%%%%%%%%%%%%%%%%%%%%%%%%%%%%%%%%%%%%%%%%%%%%%%%%%%%%%%%%%%%%%%%%%%%%%%%%%%%%%%%%%%%%%%%%%%%%%%%%%%%%%%%%%%%%%%%%%%%%%%%%%%%%%%%%%%%%%%%%%%%%%%%%%%%%%%%%%%%%%%%%%%%%%%%%%%%%%%%%%%%%%%%%%%%%%%%%%%%%%%%%%%%%%%%%%%%%%%%%%%%%%%%%%%%%%%%%%%%%%%%%%%%%%%%%%%%%%%%
\section{Problems for further investigation}\label{problems}
A natural question would be, for every fixed $i\ge 4$, whether there is a formula (similar to the ones in \cite[Theorem 1.6]{C4} and Theorems \ref{m1} and \ref{m2}) for the unique nonnegative, integral solution $(x,y)$ of the pair
\begin{align}
    F_n^ix_n + F_{n+1}^iy_n &\ =\ \frac{(F_n^i-1)(F_{n+1}^i-1)}{2},\\
    1+ F_n^ix_n + F_{n+1}^iy_n &\ =\ \frac{(F_n^i-1)(F_{n+1}^i-1)}{2}.
\end{align}
We can follow the same process that we use to obtain Theorems \ref{m1} and \ref{m2}: collecting data and then looking for any pattern. The drawback is that when $i$ is big, collecting data for $\Gamma(F_n^i, F_{n+1}^i)$ becomes more difficult, which limits our ability to observe a pattern  (if any) for $\Gamma(F_n^i, F_{n+1}^i)$. For instance, in collecting data for $\Gamma(F_n^4, F_{n+1}^4)$, we face a memory error at $\Gamma(F_{12}^4, F_{13}^4)$. While there are some patterns in Table 3 that may hint at a possible formula for the solution $(x_n, y_n)$ such as
$$
y_4-x_3\ =\ 1, y_5 - x_4 \ =\ -1, y_7-x_6 \ =\ 1, y_8-x_7 \ =\ -1, y_{10}-x_9 \ =\ 1, y_{11}-x_{10}\ =\ -1,
$$
there is no clear periodicity for $\Gamma(F_n^4, F_{n+1}^4)$, at least from the first $10$ values in Table 3.

\begin{center}
\begin{tabular}{ |c|c|c|c|c|c| } 
 \hline
 $n$ & $F_n^4$ & $F_{n+1}^4$ & $x_n$ & $y_n$ & $\Gamma(F_n^4, F_{n+1}^4)$ \\
 \hline
2 & 1 & 16 & 0 & 0 & 1\\
3 & 16 & 81 & 2 & 7 & 2\\
4 & 81 & 625 & 285 & 3 & 1\\
5 & 625 & 4096 & 183 & 284 & 2\\
6 & 4096 & 28561 & 1286 & 1863 & 2\\
7 & 28561 & 194481 & 88473 & 1287 & 1\\
8 & 194481 & 1336336 & 60247 & 88472 & 2\\
9 & 1336336 & 9150625 & 412554 & 607919 & 2\\
10 & 9150625 & 62742241 & 28542389 & 412555 & 1\\
11 & 62742241 & 429981696 & 19385711 & 28542388 & 1\\
 \hline
\end{tabular}
\begin{center}Table 3. Data for $\Gamma(F_n^4, F_{n+1}^4)$.\end{center}
\end{center}
Another unexpected feature of Table 3 is the non-monotonicity of $x_n$ and $y_n$, unlike what we have in Tables 1 and 2.

A more general problem is to find a formula for the solution of $\Gamma(F_n^i, F_{n+1}^j)$ for arbitrary $i$ and $j$. As an example, we collect the data for $\Gamma(F_n^2, F_{n+1}^3)$:

\begin{center}
\begin{tabular}{ |c|c|c|c|c|c| } 
 \hline
 $n$ & $F_n^2$ & $F_{n+1}^3$ & $x_n$ & $y_n$ & $\Gamma(F_n^2, F_{n+1}^3)$ \\
 \hline
2 & 1 & 8 & 0 & 0 & 1\\
3 & 4 & 27 & 3 & 1 & 1\\
4 & 9 & 125 & 55 & 0 & 2\\
5 & 25 & 512 & 20 & 11 & 1\\
6 & 64 & 2197 & 51 & 30 & 1\\
7 & 169 & 9261 & 4493 & 2 & 2\\
8 & 441 & 39304 & 356 & 216 & 1\\
9 & 1156 & 166375 & 935 & 571 & 1\\
10 & 3025 & 704969 & 350037 & 10 & 2\\
 \hline
\end{tabular}
\begin{center}Table 4. Data for $\Gamma(F_n^2, F_{n+1}^3)$.\end{center}
\end{center}
Table 4 suggests that $\Gamma(F_n^2, F_{n+1}^3)$ follows the same pattern as $\Gamma(F_n^2, F_n^2)$. 

Finally, all of the above questions can be asked for the pair in Theorem \ref{m3}.

%%%%%%%%%%%%%%%%%%%%%%%%%%%%%%%%%%%%%%%%%%%%%%%%%%%%%%%%%%%%%%%%%%%%%%%%%%%%%%%%%%%%%%%%%%%%%%%%%%%%%%%%%%%%%%%%%%%%%%%%%%%%%%%%%%%%%%%%%%%%%%%%%%%%%%%%%%%%%%%%%%%%%%%%%%%%%%%%%%%%%%%%%%%%%%%%%%%%%%%%%%%%%%%%%%%%%%%%%%%%%%%%%%%%%%%%%%%%%%%%%%%%%%%%%%%%%%%%%%%%%%%%%%%%%%%%%%%%%%%%%%%%%%%%%%%%%%%%%%%%%%%%%%%%%%%%%%%%%%%%%%%%%%%%%%%%%%%%%%%%%%%%%%%%%%%%%%%%%%%%%%%%%%%%%%%%%%%%%%%%%%%%%%%%%%%%%%%%%%%%%%%%%%%%%%%

 \ \\

\end{document}